\numberwithin{equation}{section}
\declaretheoremstyle[
  bodyfont=\normalfont\itshape,
  headformat=\NAME\ \NUMBER\NOTE,
]{myplain}
\declaretheoremstyle[
  headformat=\NAME\ \NUMBER\NOTE,
]{mydefinition}
\newcommand{\envqed}{{\lower-0.3ex\hbox{$\triangleleft$}}}
\declaretheorem[style=myplain,numberwithin=section]{theorem}
\declaretheorem[style=myplain,numberlike=theorem]{lemma}
\declaretheorem[style=mydefinition,numberlike=theorem,qed=\envqed]{definition}
\let\epsilon\varepsilon
\let\phi\varphi
\let\rho\varrho
\newcommand{\const}{\mathrm{const}}
\renewcommand{\vec}[1]{\pmb{#1}}
\NewDocumentCommand{\opD}{m+g}{%
  \IfNoValueTF{#2}
    {D_{#1}}
    {D_{#1,#2}}%
}
\NewDocumentCommand{\opDsplit}{m+g}{%
  \IfNoValueTF{#2}
    {\widetilde{D}_{#1}}
    {\widetilde{D}_{#1,#2}}%
}
\NewDocumentCommand{\opM}{g}{%
  \IfNoValueTF{#1}
    {M}
    {M_{#1}}%
}
\NewDocumentCommand{\opQ}{g}{%
  \IfNoValueTF{#1}
    {Q}
    {Q_{#1}}%
}
\NewDocumentCommand{\opI}{g}{%
  \IfNoValueTF{#1}
    {I}
    {I_{#1}}%
}
\NewDocumentCommand{\opV}{g}{%
  \IfNoValueTF{#1}
    {V}
    {V_{#1}}%
}
\NewDocumentCommand{\opB}{g}{%
  \IfNoValueTF{#1}
    {B}
    {B_{#1}}%
}
\NewDocumentCommand{\opR}{g}{%
  \IfNoValueTF{#1}
    {R}
    {R_{#1}}%
}
\NewDocumentCommand{\opN}{m+g}{%
  \IfNoValueTF{#2}
    {N_{#1}}
    {N_{#1,#2}}%
}
\NewDocumentCommand{\x}{g}{%
  \IfNoValueTF{#1}
    {\vec{x}}
    {\vec{x}_{#1}}%
}
\NewDocumentCommand{\fnum}{g}{%
  \IfNoValueTF{#1}
    {f^{\mathrm{num}}}
    {f^{\mathrm{num,#1}}}%
}
\NewDocumentCommand{\vecfnum}{g}{%
  \IfNoValueTF{#1}
    {\vec{f}^{\mathrm{num}}}
    {\vec{f}^{\mathrm{num,#1}}}%
}
\NewDocumentCommand{\vecfcorr}{g}{%
  \IfNoValueTF{#1}
    {\vec{f}^{\mathrm{corr}}}
    {\vec{f}^{\mathrm{corr,#1}}}%
}
\NewDocumentCommand{\fvol}{g}{%
  \IfNoValueTF{#1}
    {f^{\smash{\mathrm{vol}}}}
    {f^{\smash{\mathrm{vol,#1}}}}%
}
\newcommand{\mean}[1]{{\{\mkern-6mu\{}#1{\}\mkern-6mu\}}}
\newcommand{\jump}[1]{{[\mkern-3mu[}#1{]\mkern-3mu]}}
\newcommand{\orcid}[1]{ORCID:~\href{https://orcid.org/#1}{#1}}
\newenvironment{keywords}{\par\textbf{Key words.}}{\par}
\title{A Note on Numerical Fluxes Conserving a Member of Harten's One-Parameter Family of Entropies for the Compressible Euler Equations}
\author[1]{Hendrik~Ranocha\thanks{\orcid{0000-0002-3456-2277}}}
\affil[1]{Applied Mathematics, University of Hamburg, Germany}
\date{March 8, 2022}
\begin{document}

\maketitle

\begin{abstract}
\noindent
  Entropy-conserving numerical fluxes are a cornerstone of modern high-order
entropy-dissipative discretizations of conservation laws. In addition to entropy
conservation, other structural properties mimicking the continuous level such as
pressure equilibrium and kinetic energy preservation are important. This note
proves that there are no numerical fluxes conserving (one of) Harten's entropies
for the compressible Euler equations that also preserve pressure equilibria and
have a density flux independent of the pressure. This is in contrast to fluxes
based on the physical entropy, where even kinetic energy preservation can be
achieved in addition.

\end{abstract}

\begin{keywords}
  entropy stability,
  numerical fluxes,
  flux differencing,
  kinetic energy preservation,
  pressure equilibrium preservation,
  local linear stability
\end{keywords}

\section{Introduction}
\label{sec:introduction}

Ever since the seminal work of Tadmor \cite{tadmor1987numerical}, researchers
have been interested in entropy-dissipative numerical methods for conservation laws
and related models. Usually, these methods have improved robustness properties,
even for underresolved simulations. Nowadays, several means have been explored
to ensure entropy stability. One of the most popular and successful approaches
is based on entropy-conservative (EC) numerical fluxes. These can be used to
construct high-order central-type methods using flux differencing \cite{fisher2013high}
to which appropriate dissipation can be added. A recent alternative is the general
algebraic approach of \cite{abgrall2018general,abgrall2022reinterpretation}.

There are several numerical fluxes for the compressible Euler equations
conserving the physical (logarithmic) entropy
\cite{ismail2009affordable,chandrashekar2013kinetic,ranocha2018comparison}.
Harten \cite{harten1983symmetric} studied another family of entropies for the
compressible Euler equations. Although these entropies do not symmetrize the heat
flux terms in the compressible Navier-Stokes equations \cite{hughes1986new}, they are still of
interest, for example to construct entropy splitting methods or related
EC fluxes \cite{sjogreen2019entropy}.

Entropy conservation alone is often insufficient to construct good numerical
methods. Of course, dissipation and related issues such as shock capturing and
positivity of the density and internal energy are also important but are not the
focus of this contribution. Instead, preservation of the kinetic energy
\cite{jameson2008formulation,ranocha2018thesis,ranocha2020entropy}
and pressure equilibria \cite{ranocha2021preventing,shima2021preventing}
is considered. Moreover, the numerical density flux should not depend on the
pressure, in accordance with physical expectations. This is discussed further in
\cite{derigs2017novel,ranocha2018comparison}, where positivity failure could be
identified for certain setups with large pressure jumps and constant densities
and velocities, even in the presence of strong (numerical) dissipation.

The main contribution of this note is to prove that there are no numerical fluxes
conserving Harten's entropies for the compressible Euler equations that also
preserve pressure equilibria and have a density flux independent of the pressure
(Section~\ref{sec:main}). Further discussion of this result is presented in
Section~\ref{sec:discussion}.

\section{Main result}
\label{sec:main}

It suffices to concentrate on the 1D compressible Euler equations
\begin{equation}
\label{eq:euler}
  \partial_t \vec{u} + \partial_x \vec{f}(\vec{u}) = \vec{0},
  \qquad
  \vec{u} = (\rho, \rho v, \rho e)^T,
  \quad
  \vec{f}(\vec{u}) = (\rho v, \rho v^2 + p, (\rho e + p) v)^T,
\end{equation}
where $\rho$ is the density, $v$ the velocity, $\rho e$ the total energy, and
the pressure $p = (\gamma - 1) (\rho e - \rho v^2 / 2)$ is given by the ideal
gas law with ratio of specific heats $\gamma > 1$.
Harten \cite{harten1983symmetric} considered entropies of the form
$U(\vec{u}) = - \rho h(s)$, where $s =  \log(p / \rho^\gamma)$ and $h$ is a smooth
function satisfying $h''(s) / h'(s) < 1 / \gamma$. A special one-parameter
family of these entropies is given by
\begin{equation}
\label{eq:U}
  U(\vec{u}) =
  -\frac{\gamma + \alpha}{\gamma - 1} \rho (p / \rho^\gamma)^{1 / (\alpha + \gamma)},
  \qquad
  \alpha > 0 \text{ or } \alpha < -\gamma,
\end{equation}
where the restriction of the parameter $\alpha$ ensures convexity of $U$
\cite{sjogreen2019entropy}. The members of this one-parameter family \eqref{eq:U}
of entropy functions for the compressible Euler equations are often referred to
as Harten's entropies in the literature, e.g., in \cite{sjogreen2019entropy}.
The associated entropy variables $\vec{w} = U'(\vec{u})$ and the flux potential $\psi$
\cite{tadmor1987numerical} are given by
\begin{equation}
\label{eq:w-psi}
  \vec{w} = \frac{\rho}{p} (p / \rho^\gamma)^{1 / (\alpha + \gamma)} \left(
    -\frac{\alpha}{\gamma - 1} \frac{p}{\rho} - \frac{1}{2} v^2, v, -1
  \right),
  \qquad
  \psi = \rho (p / \rho^\gamma)^{1 / (\alpha + \gamma)} v.
\end{equation}
This note focuses on two-point numerical fluxes for the compressible Euler
equations. Such a two-point numerical flux $\vecfnum$ is characterized as follows.
\begin{definition}[Entropy conservation \cite{tadmor1987numerical}]
\label{def:ec}
  The numerical flux $\vecfnum$ is EC if $\jump{\vec{w}} \cdot \vecfnum - \jump{\psi} = 0$,
  where $\jump{a} = a_+ - a_-$ is the common jump operator.
\end{definition}
\begin{definition}[Pressure equilibrium preservation \cite{ranocha2021preventing}]
\label{def:pep}
  A numerical flux $\vecfnum = (\vecfnum_\rho, \vecfnum_{\rho v}, \vecfnum_{\rho e})$ is
  pressure equilibrium preserving (PEP) if
  $\vecfnum_{\rho v} = v \vecfnum_{\rho} + \const_1(p, v)$
  and
  $\vecfnum_{\rho e} = \frac{1}{2} v^2 \vecfnum_\rho + \const_2(p, v)$
  whenever the velocity $v$ and the pressure $p$ are constant.
  Here, $\const_i(p, v)$, $i \in \{ 1, 2\}$, denote some generic constants
  depending only on the constants $v$ and $p$.
\end{definition}
As discussed in \cite{shima2021preventing,ranocha2021preventing,gassner2022stability},
pressure equilibria are important setups where the velocity and the pressure are
constant. In this case, the compressible Euler equations are reduced to linear
advection equations. Pressure equilibrium preserving schemes keep this property
at the discrete level \cite{ranocha2021preventing}.

The main result of this note is
\begin{theorem}
\label{thm:main}
  There is no two-point numerical flux for the compressible Euler equations \eqref{eq:euler}
  that is entropy-conserving in the sense of Tadmor (Def.~\ref{def:ec})
  for a member of Harten's one-parameter family of entropies \eqref{eq:U},
  pressure equilibrium preserving (Def.~\ref{def:pep}), and has a density flux
  that does not depend on the pressure.
\end{theorem}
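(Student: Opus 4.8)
The plan is to derive a contradiction by combining the three assumptions into an explicit functional form for the numerical fluxes and then showing that the resulting constraints are incompatible. My proof strategy rests on the observation that the entropy-conservation condition (Def.~\ref{def:ec}) is a single scalar equation relating the three flux components, but the PEP property and the pressure-independence of the density flux impose strong structural restrictions that I can exploit to pin down the fluxes nearly completely.

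\medskip

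First I would write out the entropy-conservation condition $\jump{\vec{w}} \cdot \vecfnum = \jump{\psi}$ explicitly using the entropy variables and flux potential from \eqref{eq:w-psi}. To make the algebra tractable, I would introduce the shorthand $\beta = (p/\rho^\gamma)^{1/(\alpha+\gamma)}$ for the recurring nonlinear factor, so that $\vec{w} = (\rho/p)\beta\,(-\tfrac{\alpha}{\gamma-1}\tfrac{p}{\rho} - \tfrac12 v^2,\ v,\ -1)$ and $\psi = \rho\beta v$. The key tactic is then to evaluate this scalar EC identity along a one-parameter family of states chosen so that the PEP hypothesis is active, namely states in which $v$ and $p$ are held constant while only $\rho$ varies. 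Under this restriction the PEP definition forces $\vecfnum_{\rho v} = v\,\vecfnum_\rho + \const_1$ and $\vecfnum_{\rho e} = \tfrac12 v^2\,\vecfnum_\rho + \const_2$, which collapses the three unknown flux components down to $\vecfnum_\rho$ plus two constants depending only on $(p,v)$. Substituting these into the EC identity turns it into a relation that determines $\vecfnum_\rho$ (and the constants) as a function of the two densities $\rho_\pm$, the common velocity, and the common pressure.

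\medskip

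Next I would impose the third hypothesis, that the density flux $\vecfnum_\rho$ is independent of $p$. The previous step will have produced an expression in which $\vecfnum_\rho$ is forced to carry a genuine $p$-dependence — this is where the incompatibility surfaces. Concretely, because the entropy variables and the flux potential both contain the factor $\beta = (p/\rho^\gamma)^{1/(\alpha+\gamma)}$, which depends nontrivially on $p$ for every admissible $\alpha$ (recall $\alpha > 0$ or $\alpha < -\gamma$, so $\alpha + \gamma \neq 0$ and the exponent is a genuine nonzero power), the density component of the flux inherited from the EC constraint cannot be purged of $p$ while simultaneously respecting the PEP structure. I would make this precise by differentiating the derived relation with respect to $p$ at fixed $\rho_\pm$ and $v$, and showing that the requirement $\partial_p \vecfnum_\rho = 0$ forces an identity that cannot hold for all $\rho_+ \neq \rho_-$. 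Comparing this with the physical-entropy case — where the analogous factor reduces to something $p$-free in the density flux, so the obstruction disappears — would explain the contrast highlighted in the abstract and motivate why Harten's entropies behave differently.

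\medskip

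The main obstacle I anticipate is the bookkeeping in the second step: the EC identity mixes the $-\tfrac{\alpha}{\gamma-1}\tfrac{p}{\rho} - \tfrac12 v^2$ term of $w_1$ with the velocity- and energy-flux substitutions, and I must take the jump $\jump{\cdot}$ of products like $\beta\,\rho/p$ carefully, since $\beta$ depends on $\rho$ even when $p$ is held fixed. The cleanest route is probably to isolate the coefficient of $\vecfnum_\rho$ in the EC equation and verify that this coefficient, namely the jump $\jump{w_1 + v\,w_2 + \tfrac12 v^2\, w_3}$ assembled from the PEP structure, carries the $\beta$-factor in a way that cannot cancel against $\jump{\psi}$ under the pressure-independence constraint. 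Once that coefficient is shown to be a nonzero $p$-dependent quantity whose ratio to the $p$-free right-hand side still depends on $p$, the contradiction follows immediately, so the essential difficulty is organizing the algebra to expose this single surviving $p$-dependence rather than getting lost in the full three-component system.
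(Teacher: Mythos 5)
Your plan breaks down at the decisive step. Restricting to states with $p$ and $v$ constant and substituting the PEP form of the momentum and energy fluxes into the EC identity does reproduce the paper's first lemma, but the density flux forced by that computation is
\begin{equation*}
  \vecfnum_{\rho}
  =
  -\frac{\gamma}{\alpha}
  \frac{ \jump{\rho^{\alpha / (\alpha + \gamma)}} }
       { \jump{\rho^{-\gamma / (\alpha + \gamma)}} } v,
\end{equation*}
because the factor $p^{1/(\alpha+\gamma)}$ multiplies \emph{every} term of the reduced EC relation and cancels. This expression contains no $p$ at all, so your proposed finishing move --- differentiate with respect to $p$ at fixed $\rho_\pm$ and $v$ and show that $\partial_p \vecfnum_\rho = 0$ is impossible --- finds nothing: $\partial_p \vecfnum_\rho = 0$ holds identically along the constant-pressure family, and the ``genuine $p$-dependence'' you expect to survive is simply not present on the submanifold you chose. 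The same cancellation defeats your fallback of inspecting the coefficient $\jump{w_1 + v w_2 + \tfrac12 v^2 w_3}$: at constant $p$ its pressure content is exactly the common power that also sits in $\jump{\psi}$.

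The missing idea is a second family of states with a pressure \emph{jump}. The paper takes $v$ constant and $p_- = p_+ (\rho_-/\rho_+)^{\alpha/(\alpha+\gamma-1)}$, chosen precisely so that $\jump{(\rho/p)(p/\rho^\gamma)^{1/(\alpha+\gamma)}} = 0$; then the jumps of the second and third entropy variables vanish, the EC condition involves only $\vecfnum_\rho$ (PEP gives no information here anyway, since its hypothesis requires constant $p$), and one obtains the different closed form
\begin{equation*}
  \vecfnum_{\rho}
  =
  -\frac{\gamma - 1}{\alpha}
  \frac{ \jump{\rho^{\alpha / (\alpha + \gamma - 1)}} }
       { \jump{\rho^{(1 - \gamma) / (\alpha + \gamma - 1)}} } v.
\end{equation*}
Pressure-independence of the density flux then enters not as a derivative condition but as the bridge between the two configurations: for the same $\rho_\pm$ and $v$ the flux cannot distinguish the constant-pressure setup from the special-ratio one, so the two formulas must coincide, and they fail to do so for generic $\rho_- \ne \rho_+$. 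Without introducing some such second, non-constant-pressure configuration, your argument cannot close.
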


The proof of Theorem~\ref{thm:main} is divided into the following steps.
\begin{lemma}
\label{lem:p-v-const}
  For $p \equiv \const$ and $v \equiv \const$, a numerical flux that is PEP and
  EC for a member of Harten's one-parameter family of entropies \eqref{eq:U}
  has a density flux of the form
  \begin{equation}
  \label{eq:p-v-const}
    \vecfnum_{\rho}
    =
    -\frac{\gamma}{\alpha}
    \frac{ \jump{\rho^{\alpha / (\alpha + \gamma)}} }
         { \jump{\rho^{-\gamma / (\alpha + \gamma)}} } v.
  \end{equation}
\end{lemma}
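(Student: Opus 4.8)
The plan is to specialize everything to constant pressure and velocity, so that the only quantity jumping across the interface is the density $\rho$, and then to exploit the EC identity $\jump{\vec{w}} \cdot \vecfnum = \jump{\psi}$ of Definition~\ref{def:ec} together with the PEP structure of Definition~\ref{def:pep}. First I would substitute $p \equiv \const$, $v \equiv \const$ into the entropy variables and flux potential from \eqref{eq:w-psi} and collect powers of $\rho$. Writing the exponent $1/(\alpha + \gamma)$ out explicitly, one sees that only the two density powers $\rho^{-\gamma/(\alpha+\gamma)}$ and $\rho^{\alpha/(\alpha+\gamma)}$ occur. Taking jumps, each $\jump{w_i}$ and $\jump{\psi}$ then reduces to a prefactor depending only on $(p,v)$ times one of $\jump{\rho^{-\gamma/(\alpha+\gamma)}}$ or $\jump{\rho^{\alpha/(\alpha+\gamma)}}$.

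Next I would insert the PEP relations $\vecfnum_{\rho v} = v \vecfnum_\rho + \const_1(p,v)$ and $\vecfnum_{\rho e} = \frac{1}{2} v^2 \vecfnum_\rho + \const_2(p,v)$ into the EC identity. This regroups the left-hand side into a coefficient multiplying $\vecfnum_\rho$, namely $\jump{w_1} + v \jump{w_2} + \frac{1}{2} v^2 \jump{w_3}$, plus a leftover term $\const_1(p,v) \jump{w_2} + \const_2(p,v) \jump{w_3}$. The useful observation here is that the kinetic-energy contributions cancel: the $v^2$ parts of $\jump{w_1}$, $v \jump{w_2}$, and $\frac{1}{2} v^2 \jump{w_3}$ sum to zero, so the coefficient of $\vecfnum_\rho$ collapses to a multiple of $\jump{\rho^{-\gamma/(\alpha+\gamma)}}$ alone.

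The one point that needs care, and which I expect to be the main (if minor) obstacle, is the evaluation of the PEP constants $\const_1(p,v)$ and $\const_2(p,v)$. Since they depend only on $p$ and $v$ and not on $\rho$, I would fix them by consistency of the numerical flux: letting $\rho_+ \to \rho_-$ forces $\vecfnum \to \vec{f}(\vec{u})$, and matching the momentum and energy components of the physical flux in \eqref{eq:euler} against the PEP relations yields $\const_1(p,v) = p$ and $\const_2(p,v) = \frac{\gamma}{\gamma - 1} p v$. With these values the leftover term reduces to a single multiple of $\jump{\rho^{\alpha/(\alpha+\gamma)}}$.

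Finally, I would move that leftover term to the right-hand side, combine it with $\jump{\psi}$ (which is likewise a multiple of $\jump{\rho^{\alpha/(\alpha+\gamma)}}$), and solve the resulting scalar equation for $\vecfnum_\rho$. After the common $(p,v)$-prefactor and the factors of $\gamma - 1$ cancel, this leaves exactly $\vecfnum_\rho = -\frac{\gamma}{\alpha} \jump{\rho^{\alpha/(\alpha+\gamma)}} / \jump{\rho^{-\gamma/(\alpha+\gamma)}}\, v$, which is \eqref{eq:p-v-const}. Once the exponent bookkeeping and the constant-evaluation step are settled, the whole argument is linear algebra in the single unknown $\vecfnum_\rho$.
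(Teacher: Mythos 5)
Your proposal is correct and follows essentially the same route as the paper: substitute the PEP form into Tadmor's EC identity, observe that the $v^2$ contributions cancel so the coefficient of $\vecfnum_\rho$ reduces to a multiple of $\jump{\rho^{-\gamma/(\alpha+\gamma)}}$, and solve the resulting scalar equation. The only (welcome) difference is that you explicitly derive $\const_1(p,v)=p$ and $\const_2(p,v)=\gamma p v/(\gamma-1)$ from consistency of the flux, a step the paper's proof asserts without comment.
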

\begin{proof}
  For constant pressure and velocity,
  \begin{equation}
  \begin{aligned}
    \jump{\vec{w}} \cdot \vecfnum - \jump{\psi}
    &=
    \left(
      -\frac{\alpha}{\gamma - 1} p^{1 / (\alpha + \gamma)} \jump{\rho^{-\gamma / (\alpha + \gamma)}}
      - \frac{1}{2} p^{(1 - \alpha - \gamma) / (\alpha + \gamma)} v^2 \jump{\rho^{\alpha / (\alpha + \gamma)}}
    \right) \vecfnum_{\rho}
    \\&\quad
    + p^{(1 - \alpha - \gamma) / (\alpha + \gamma)} v \jump{\rho^{\alpha / (\alpha + \gamma)}}
    \vecfnum_{\rho v}
    - p^{(1 - \alpha - \gamma) / (\alpha + \gamma)} \jump{\rho^{\alpha / (\alpha + \gamma)}}
    \vecfnum_{\rho e}
    \\&\quad
    - p^{1 / (\alpha + \gamma)} v \jump{\rho^{\alpha / (\alpha + \gamma)}}.
  \end{aligned}
  \end{equation}
  A PEP flux is of the form $\vecfnum_{\rho v} = v \vecfnum_{\rho} + p$,
  $\vecfnum_{\rho e} = \frac{1}{2} v^2 \vecfnum_\rho + p v \gamma / (\gamma - 1)$.
  Thus,
  \begin{equation}
  \begin{aligned}
    \jump{\vec{w}} \cdot \vecfnum - \jump{\psi}
    &=
    \left(
      -\frac{\alpha}{\gamma - 1} p^{1 / (\alpha + \gamma)} \jump{\rho^{-\gamma / (\alpha + \gamma)}}
      - \frac{1}{2} p^{(1 - \alpha - \gamma) / (\alpha + \gamma)} v^2 \jump{\rho^{\alpha / (\alpha + \gamma)}}
    \right) \vecfnum_{\rho}
    \\&\quad
    + p^{(1 - \alpha - \gamma) / (\alpha + \gamma)} v^2 \jump{\rho^{\alpha / (\alpha + \gamma)}}
    \vecfnum_{\rho}
    + p^{1 / (\alpha + \gamma)} v \jump{\rho^{\alpha / (\alpha + \gamma)}}
    \\&\quad
    - \frac{1}{2} p^{(1 - \alpha - \gamma) / (\alpha + \gamma)} v^2 \jump{\rho^{\alpha / (\alpha + \gamma)}}
    \vecfnum_{\rho}
    - \frac{\gamma}{\gamma - 1} p^{1 / (\alpha + \gamma)} v \jump{\rho^{\alpha / (\alpha + \gamma)}}
    \\&\quad
    - p^{1 / (\alpha + \gamma)} v \jump{\rho^{\alpha / (\alpha + \gamma)}}
    \\
    &=
    -\frac{\alpha}{\gamma - 1} p^{1 / (\alpha + \gamma)} \jump{\rho^{-\gamma / (\alpha + \gamma)}}
    \vecfnum_{\rho}
    - \frac{\gamma}{\gamma - 1} p^{1 / (\alpha + \gamma)} v \jump{\rho^{\alpha / (\alpha + \gamma)}}.
  \end{aligned}
  \end{equation}
  Hence, the EC condition is equivalent to \eqref{eq:p-v-const}.
\end{proof}

\begin{lemma}
\label{lem:p-special}
  For $v \equiv \const$ and $p_- = p_+ (\rho_- / \rho_+)^{\alpha / (\alpha + \gamma - 1)}$,
  a numerical flux that is EC for a member of Harten's one-parameter family
  of entropies \eqref{eq:U} has a density flux of the form
  \begin{equation}
  \label{eq:p-special}
    \vecfnum_{\rho}
    =
    -\frac{\gamma - 1}{\alpha}
    \frac{ \jump{\rho^{\alpha / (\alpha + \gamma - 1)}} }
         { \jump{\rho^{(1 - \gamma) / (\alpha + \gamma - 1)}} } v.
  \end{equation}
\end{lemma}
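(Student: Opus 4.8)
The plan is to exploit the special structure of the prescribed relation between $p$ and $\rho$, which---unlike the setting of Lemma~\ref{lem:p-v-const}---makes the entropy-conservation condition decouple onto the density flux alone, so that no PEP assumption is needed. Observe first that $p_- = p_+ (\rho_-/\rho_+)^{\alpha/(\alpha+\gamma-1)}$ is precisely the statement that the quantity $p / \rho^{\alpha/(\alpha+\gamma-1)}$ takes one common value $C$ at both states. I would therefore substitute $p = C \rho^{\alpha/(\alpha+\gamma-1)}$ into the entropy variables and the flux potential from \eqref{eq:w-psi}, keeping $v$ fixed.

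The key step is a short exponent computation showing that, under this relation, the prefactor $\frac{\rho}{p}(p/\rho^\gamma)^{1/(\alpha+\gamma)}$ is independent of $\rho$: the net power of $\rho$ collapses to zero, so this prefactor is constant across the two states. Consequently the third entropy variable, proportional to $-\frac{\rho}{p}(p/\rho^\gamma)^{1/(\alpha+\gamma)}$, has zero jump, and since $v$ is also constant, so does the second entropy variable, proportional to $\frac{\rho}{p}(p/\rho^\gamma)^{1/(\alpha+\gamma)}\, v$. Within the first entropy variable the kinetic contribution $-\frac{1}{2}\frac{\rho}{p}(p/\rho^\gamma)^{1/(\alpha+\gamma)}\, v^2$ is likewise constant, so only the term $-\frac{\alpha}{\gamma-1}(p/\rho^\gamma)^{1/(\alpha+\gamma)}$ survives the jump. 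A parallel bookkeeping of exponents gives $(p/\rho^\gamma)^{1/(\alpha+\gamma)} \propto \rho^{(1-\gamma)/(\alpha+\gamma-1)}$ and $\psi \propto \rho^{\alpha/(\alpha+\gamma-1)}\, v$, both carrying the same constant factor $C^{1/(\alpha+\gamma)}$.

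With the second and third entropy variables having vanishing jump, the entropy-conservation identity $\jump{\vec{w}} \cdot \vecfnum - \jump{\psi} = 0$ reduces to a single scalar equation in the density flux, namely $\jump{w_1}\, \vecfnum_{\rho} = \jump{\psi}$, where $w_1$ denotes the first entropy variable. Inserting the two jumps computed above, the common factor $C^{1/(\alpha+\gamma)}$ cancels, and solving for $\vecfnum_{\rho}$ yields exactly \eqref{eq:p-special}.

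I expect the main obstacle to be the exponent bookkeeping that establishes the cancellation making $\frac{\rho}{p}(p/\rho^\gamma)^{1/(\alpha+\gamma)}$ constant under the prescribed $p$--$\rho$ relation; once this is verified, the vanishing of the momentum and energy contributions to $\jump{\vec{w}}$ is immediate, and the rest is routine algebra paralleling the proof of Lemma~\ref{lem:p-v-const}.
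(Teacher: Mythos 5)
Your proposal is correct and follows essentially the same route as the paper: both hinge on showing that $(\rho/p)(p/\rho^\gamma)^{1/(\alpha+\gamma)}$ has zero jump under the prescribed pressure--density relation, so the EC condition collapses to a scalar equation for $\vecfnum_{\rho}$ alone, which is then solved and simplified by exponent bookkeeping. Your parametrization via the constant $C = p/\rho^{\alpha/(\alpha+\gamma-1)}$ is only a cosmetic variant of the paper's direct substitution of the pressure ratio.
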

\begin{proof}
  For this special choice of the pressure,
  \begin{equation}
  \begin{split}
    \frac{\rho_-}{p_-} (p_- / \rho_-^\gamma)^{1 / (\alpha + \gamma)}
    =
    \rho_-^{\alpha / (\alpha + \gamma)} p_-^{(1 - \alpha - \gamma) / (\alpha + \gamma)}
    =
    \rho_-^{\alpha / (\alpha + \gamma)} (\rho_- / \rho_+)^{-\alpha / (\alpha + \gamma)} p_+^{(1 - \alpha - \gamma) / (\alpha + \gamma)}
    \\
    =
    \rho_+^{\alpha / (\alpha + \gamma)} p_+^{(1 - \alpha - \gamma) / (\alpha + \gamma)}
    =
    \frac{\rho_+}{p_+} (p_+ / \rho_+^\gamma)^{1 / (\alpha + \gamma)},
  \end{split}
  \end{equation}
  i.e., $\jump{(\rho/p) (p / \rho^\gamma)^{1 / (\alpha + \gamma)}} = 0$. Thus,
  for $v \equiv \const$, \eqref{eq:w-psi} yields
  \begin{equation}
  \begin{aligned}
    \jump{\vec{w}} \cdot \vecfnum - \jump{\psi}
    &=
    - \frac{\alpha}{\gamma - 1} \jump{(p / \rho^\gamma)^{1 / (\alpha + \gamma)}}
    \vecfnum_{\rho}
    - \jump{\rho (p / \rho^\gamma)^{1 / (\alpha + \gamma)}} v.
  \end{aligned}
  \end{equation}
  Consequently, an EC flux must be of the form
  \begin{equation}
    \vecfnum_{\rho}
    =
    - \frac{\gamma - 1}{\alpha}
    \frac{ \jump{\rho (p / \rho^\gamma)^{1 / (\alpha + \gamma)}} }
         { \jump{(p / \rho^\gamma)^{1 / (\alpha + \gamma)}} } v.
  \end{equation}
  Inserting the pressure ratio, the fraction of the jump terms can be written as
  \begin{equation}
  \begin{split}
    \frac{   \rho_+^{\alpha / (\alpha + \gamma)} p_+^{1 / (\alpha + \gamma)}
           - \rho_-^{\alpha / (\alpha + \gamma)} p_-^{1 / (\alpha + \gamma)} }
         {   \rho_+^{-\gamma / (\alpha + \gamma)} p_+^{1 / (\alpha + \gamma)}
           - \rho_-^{-\gamma / (\alpha + \gamma)} p_-^{1 / (\alpha + \gamma)} }
    =
    \frac{   \rho_+^{\alpha / (\alpha + \gamma)}
           - \rho_-^{\alpha / (\alpha + \gamma)} (\rho_- / \rho_+)^{\alpha / ((\alpha + \gamma - 1) (\alpha + \gamma))} }
         {   \rho_+^{-\gamma / (\alpha + \gamma)}
           - \rho_-^{-\gamma / (\alpha + \gamma)} (\rho_- / \rho_+)^{\alpha / ((\alpha + \gamma - 1) (\alpha + \gamma))} }
    \\
    =
    \frac{   \rho_+^{\alpha / (\alpha + \gamma - 1)}
           - \rho_-^{\alpha / (\alpha + \gamma - 1)} }
         {   \rho_+^{-\gamma / (\alpha + \gamma - 1)}
           - \rho_-^{-\gamma / (\alpha + \gamma - 1)} },
  \end{split}
  \end{equation}
  proving \eqref{eq:p-special}.
\end{proof}

\begin{proof}[Proof of Theorem~\ref{thm:main}]
  For given $\alpha, \gamma$, and $\rho_\pm$, such a flux needs to satisfy both
  \eqref{eq:p-v-const} and \eqref{eq:p-special}, i.e.,
  \begin{equation}
    \gamma
    \frac{ \jump{\rho^{\alpha / (\alpha + \gamma)}} }
         { \jump{\rho^{-\gamma / (\alpha + \gamma)}} }
    =
    (\gamma - 1)
    \frac{ \jump{\rho^{\alpha / (\alpha + \gamma - 1)}} }
         { \jump{\rho^{(1 - \gamma) / (\alpha + \gamma - 1)}} }.
  \end{equation}
  For fixed $\alpha$ and $\gamma$, it is easy to find $\rho_- \ne \rho_+$ such
  that this equation is not satisfied. Hence, a numerical flux with all properties
  listed in Theorem~\ref{thm:main} cannot exist.
\end{proof}

\section{Discussion}
\label{sec:discussion}

There are no numerical fluxes conserving a member of Harten's one-parameter
family of entropies \eqref{eq:U} for the
compressible Euler equations that also preserve pressure equilibria and have a
density flux independent of the pressure (Theorem~\ref{thm:main}).
This result is in contrast to fluxes conserving the physical (logarithmic) entropy,
where kinetic energy preservation can be achieved in addition to all of these
properties \cite{ranocha2018thesis,ranocha2020entropy}, resulting in an essentially
unique numerical flux \cite{ranocha2021preventing}.

Following the uniqueness proof of such a numerical flux based on the logarithmic
entropy presented in \cite{ranocha2021preventing}, it is tempting to choose
a density flux based on a simplified setting, e.g., for constant velocity and
pressure as in Lemma~\ref{lem:p-v-const}. Then, kinetic energy preservation
determines the momentum flux accordingly \cite{ranocha2018thesis,ranocha2020entropy}
and one might expect to be able to use the EC criterion to find an energy flux.
However, this does not work in general since the energy part of the numerical flux
can be orthogonal to the jump of the entropy variables as in Lemma~\ref{lem:p-special}.
In this case, such an approach will often lead to a blow-up of the
derived component for the total energy flux.
Hence, one cannot impose a form of the density flux in general.

\begin{figure}[!ht]
\centering
  \begin{subfigure}{0.33\textwidth}
  \centering
    \includegraphics[width=\linewidth]{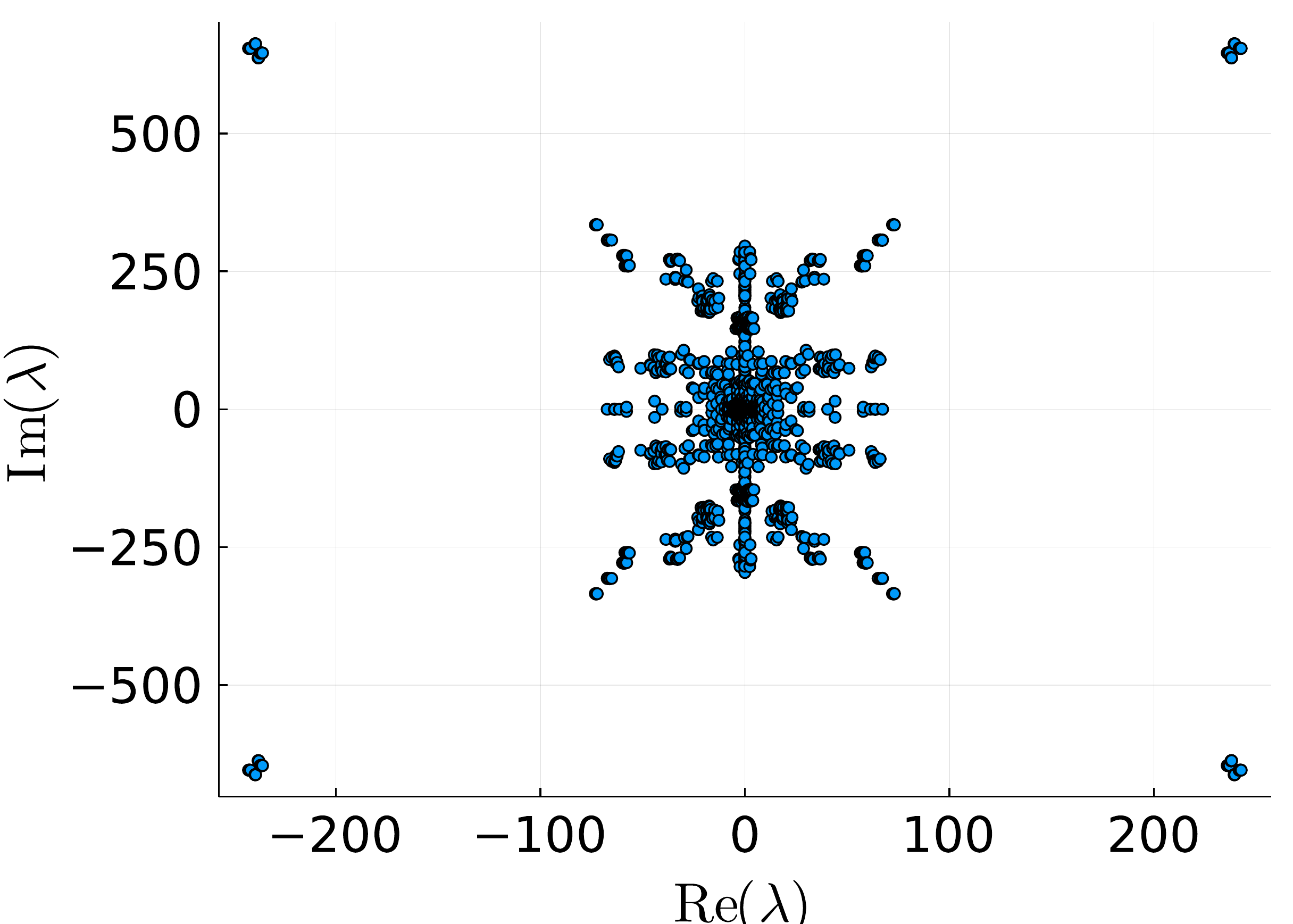}
    \caption{$\alpha = -1.8$, $\beta = 1$.}
  \end{subfigure}%
  \hspace*{\fill}
  \begin{subfigure}{0.33\textwidth}
  \centering
    \includegraphics[width=\linewidth]{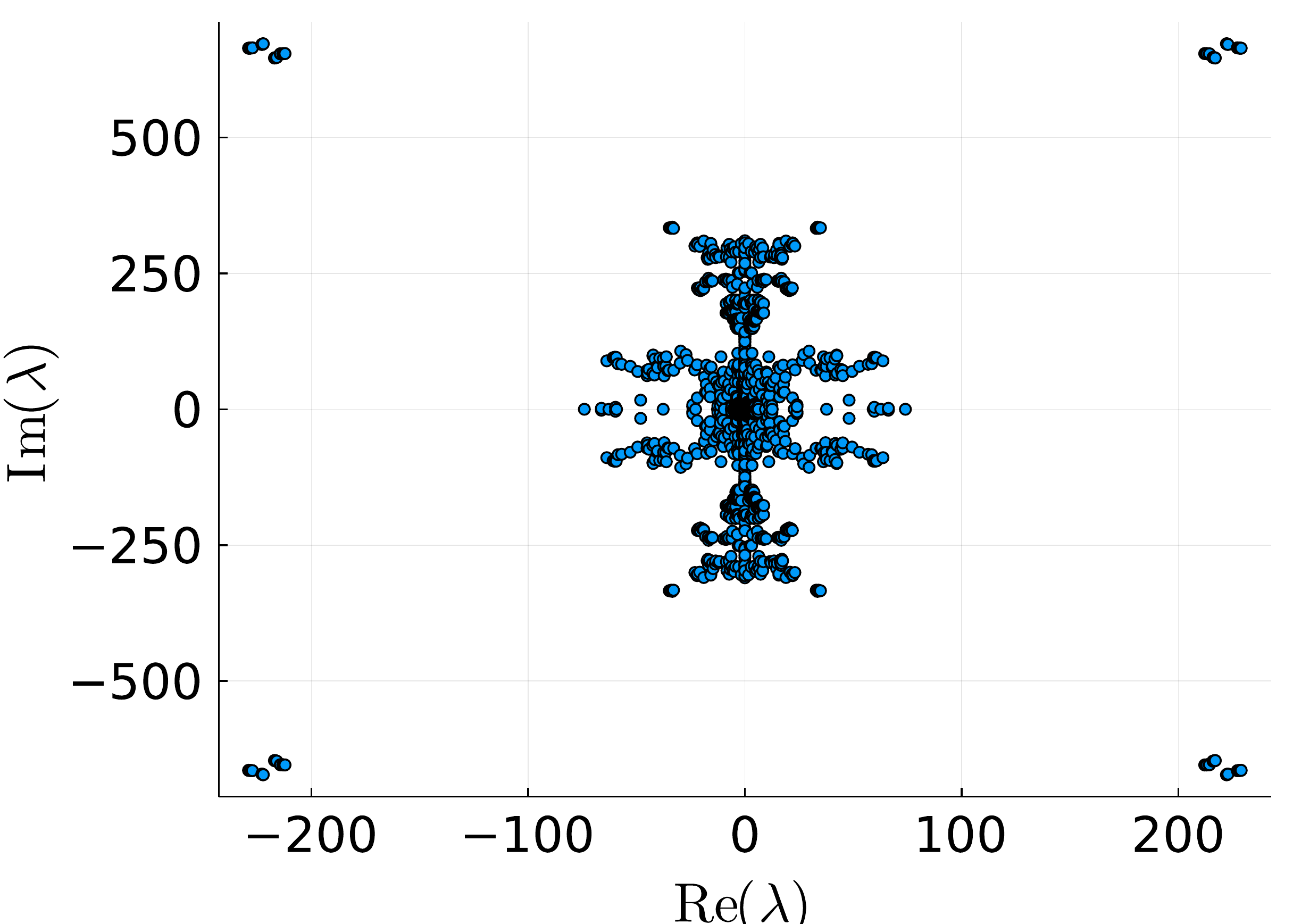}
    \caption{$\alpha = -2.2$, $\beta = 2$.}
  \end{subfigure}%
  \hspace*{\fill}
  \begin{subfigure}{0.33\textwidth}
  \centering
    \includegraphics[width=\linewidth]{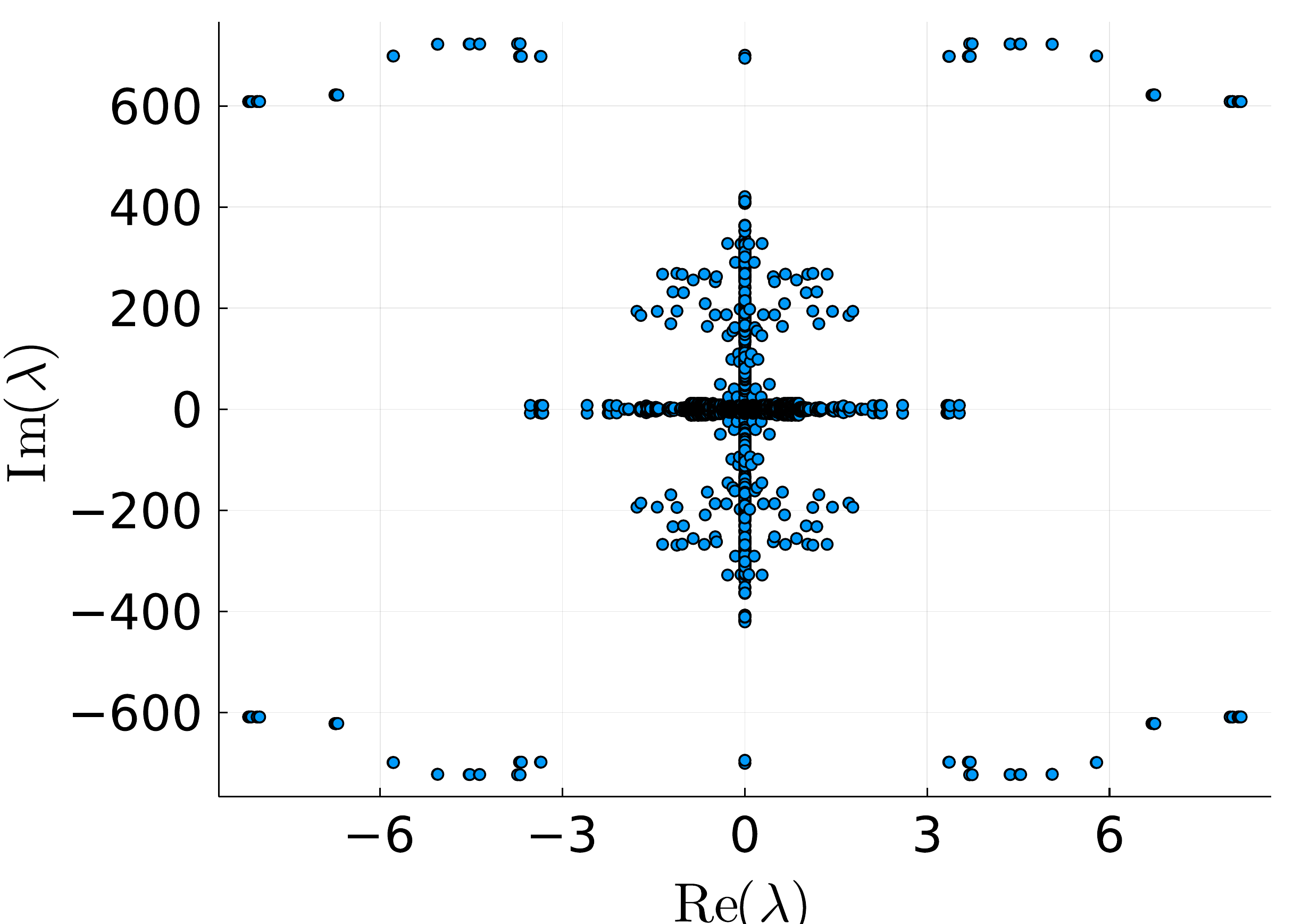}
    \caption{$\alpha = 1.0$, $\beta = -6$.}
  \end{subfigure}%
  \caption{Spectra of discontinuous Galerkin semidiscretizations of the 2D
           compressible Euler equations conserving Harten's entropy \eqref{eq:U} with parameter
           $\alpha$ using some values of $\beta = (\alpha + \gamma) / (1 - \gamma)$
           recommended for entropy splitting discretizations \cite{sjogreen2019entropy}
           and $\alpha = 1$.
           The EC discretizations use the flux of \cite{sjogreen2019entropy}
           and tensor product Lobatto Legendre bases with polynomials of degree
           five on $4 \times 4$ elements. Using the central flux instead results
           in a spectrum with negligible imaginary part
           \cite{gassner2022stability,ranocha2021preventing}.}
  \label{fig:spectra}
\end{figure}

Moreover, it is interesting to note that EC fluxes based on Harten's one-parameter
family of entropies
do not solve the local linear/energy stability issues discussed in
\cite{gassner2022stability,ranocha2021preventing}. This is demonstrated by
the spectra of discontinuous Galerkin semidiscretizations of the 2D compressible
Euler equations shown in Figure~\ref{fig:spectra}. The setup uses the density
wave example described in \cite{ranocha2021preventing}.
If the central flux $\vecfnum = \mean{\vec{f}}$ is used instead of an EC flux, the spectrum
of the resulting semidiscretization is essentially purely imaginary (ignoring
floating point errors etc.), indicating (marginal) local linear/energy stability of the method
\cite{gassner2022stability,ranocha2021preventing}. This property is desired
and reasonable for this physical setup, since the density wave reduces the
compressible Euler equations to linear advection equations, see also the extended
discussion in \cite{gassner2022stability,ranocha2021preventing}.

The semidiscretizations used to compute the spectra above are implemented using
Trixi.jl \cite{ranocha2022adaptive,schlottkelakemper2021purely}. The Jacobian
is computed using automatic differentiation \cite{revels2016forward} in Julia
\cite{bezanson2017julia}. All source code required to reproduce the examples
as well as additional material verifying the implementation and some calculations
presented in this article are available online \cite{ranocha2022noteRepro}.

\section*{Acknowledgments}

Funded by the Deutsche Forschungsgemeinschaft (DFG, German Research Foundation)
under Germany's Excellence Strategy EXC 2044-390685587, Mathematics M\"{u}nster:
Dynamics-Geometry-Structure.
Special thanks to Gregor Gassner and Ayaboe Edoh for discussions related to this manuscript.

\printbibliography

\end{document}